\newcommand{\GF}{{\mathbb F}}
\newcommand{\R}{{\mathbb R}}
\newcommand{\wt}{{\rm wt}}
\newcommand{\supp}{{\rm supp}}
\DeclareMathOperator{\Harm}{Harm}
\newtheorem{Thm}{Theorem}[section]
\newtheorem{Lem}[Thm]{Lemma}
\newtheorem{Cor}[Thm]{Corollary}
\newtheorem{Prop}[Thm]{Proposition}
\theoremstyle{definition}
\newtheorem{Def}[Thm]{Definition}
\newtheorem{rem}[Thm]{Remark}
\newcommand{\CC}{\mathbb{C}}
\newcommand{\FF}{\mathbb{F}}
\begin{document}

\title[On on Assmus--Mattson type theorem]
{
On an Assmus--Mattson type theorem 
for Type I and even formally self-dual codes
}

\author{Tsuyoshi Miezaki*}
\thanks{*Corresponding author}
\address{		Faculty of Science and Engineering, 
		Waseda University, 
		Tokyo 169-8555, Japan
}
\email{miezaki@waseda.jp} 
\author{Hiroyuki Nakasora}
\address{Institute for Promotion of Higher Education, Kobe Gakuin University, Kobe
651--2180, Japan}
\email{nakasora@ge.kobegakuin.ac.jp}

\date{}

\maketitle

\begin{abstract}
In the present paper, we give an Assmus--Mattson type theorem
for near-extremal Type I and even formally self-dual codes.
We show the existence of $1$-designs or $2$-designs for these codes. 
As a corollary, 
we prove the uniqueness of a self-orthogonal $2$-$(16,6,8)$ design.
\end{abstract}

{\small
\noindent
{\bfseries Key Words:}
Assmus--Mattson theorem, 
even formally self-dual codes, $t$-designs, 
harmonic weight enumerators.\\ \vspace{-0.15in}

\noindent
2010 {\it Mathematics Subject Classification}. 
Primary 05B05; Secondary 94B05.\\ \quad
}


\setcounter{section}{+0}
\section{Introduction}

The Assmus--Mattson theorem derives 
the existence of designs from certain 
conditions 
on the weight distribution of the dual code.
Although the Assmus--Mattson theorem 
cannot be applied to 
a near-extremal Type II code, 
we showed that all the fixed weight supports are 
$1$-designs \cite{Miezaki-Munemasa-Nakasora}. 
We call this an Assmus--Mattson type theorem. 

In the present paper, we state an Assmus--Mattson type theorem
for Type I and even formally self-dual codes. 
Let us explain the first result of the present paper. 
Type I codes are binary self-dual codes that are not doubly even. 
A binary even code with the same weight distribution as its dual code is called even formally self-dual. 
Let $C$ be a Type I or an even formally self-dual code of length $n$.
The minimum distance $d$ of $C$ satisfies the
Mallows-Sloane bound \cite{mallows-sloane}
\[
d \leq 2 \left\lfloor\frac{n}{8}\right\rfloor+2.
\]
We say that $C$ meeting
the Mallows-Sloane bound with equality is extremal. 
If 
\[d=2 \left\lfloor\frac{n}{8}\right\rfloor,\]
$C$ is called near-extremal.
We note that 
the Assmus--Mattson theorem 
cannot be applied to 
near-extremal Type I and even formally self-dual codes. 
However, we show the following. 
\begin{Thm}\label{thm:near}
Let $C$ be a near-extremal Type I code of length $n$ 
and $C'$ be a near-extremal even formally self-dual code of length $n$.
Let $C_{w}$ be the support design of a code $C$ for weight $w$. 
If $n \equiv 0 \pmod{8}$, then $C_{w}$ and $C'_{w}\cup C'^{\perp}_{w}$ are $1$-designs.
\end{Thm}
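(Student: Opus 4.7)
The natural framework is Bachoc's harmonic weight enumerator. For a discrete harmonic polynomial $f$ of degree $k$ one sets
\[
w_{C,f}(x,y)=\sum_{c\in C}\tilde{f}(c)\,x^{n-\wt(c)}y^{\wt(c)},
\]
and the basic criterion is that $C_{w}$ is a $t$-design if and only if the coefficient of $x^{n-w}y^{w}$ in $w_{C,f}$ vanishes for every harmonic $f$ of every degree $1\le k\le t$. To prove Theorem~\ref{thm:near} it therefore suffices to check that, for every harmonic $f$ of degree $1$, the polynomial $w_{C,f}$ vanishes identically in the Type I case, and that $w_{C',f}+w_{C'^{\perp},f}$ vanishes identically in the even formally self-dual case.

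The plan is to combine three ingredients. First, the harmonic MacWilliams identity shows that passing from $C$ to $C^{\perp}$ acts on $w_{C,f}$ by a specific linear substitution, and that the evenness of $C$ provides an additional invariance under $y\mapsto -y$ up to a sign depending on $k$. For a Type I code these two symmetries make $w_{C,f}$ a relative invariant of a finite matrix group; for an even formally self-dual code the same is true of the symmetric combination $w_{C',f}+w_{C'^{\perp},f}$, since $C'$ and $C'^{\perp}$ share the same weight distribution and the identity exchanges their harmonic enumerators. Second, a Gleason-type argument identifies the space of such relative invariants of degree $1$ and length $n$ as a finite-dimensional module over a small polynomial invariant ring, whose dimension grows linearly with $\lfloor n/8\rfloor$. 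Third, a harmonic enumerator of degree $1$ vanishes automatically at weights $0,1,n-1,n$, and the near-extremality hypothesis $d=2\lfloor n/8\rfloor$ forces vanishing also at weights $2,3,\dots,d-1$, giving a system of linear conditions on the parameters of the module.

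The proof is then completed by a dimension count: one checks that, in the residue class $n\equiv 0\pmod{8}$, the number of vanishing conditions imposed by near-extremality matches or exceeds the dimension of the invariant module, forcing $w_{C,f}\equiv 0$ in the Type I case and $w_{C',f}+w_{C'^{\perp},f}\equiv 0$ in the even formally self-dual case. Applying the harmonic criterion then yields the $1$-design property at every weight $w$. The main obstacle is precisely this dimension count, which has to be carried out explicitly using the Molien series of the relevant Clifford--Weil group; a secondary concern is that the count could fail for a small number of sporadic lengths, which would then have to be handled by a direct computation of the harmonic weight enumerator in those cases.
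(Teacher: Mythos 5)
Your proposal is correct and follows essentially the same route as the paper: Bachoc's harmonic weight enumerators together with Delsarte's design criterion, the Gleason-type description of the relative invariants containing $Z_{C,f}+Z_{C^{\perp},f}$ (the paper's Lemma~\ref{lem:Zc}, quoting Bachoc's explicit invariant ring rather than a Molien-series count), and the observation that near-extremality forces enough low-weight coefficients to vanish that the degree-$1$ enumerator is identically zero when $n\equiv 0\pmod 8$. The paper carries out your ``dimension count'' as a direct degree computation showing the surviving basis element would need $(x^2+y^2)$ to appear to the exponent $r/2-1<0$.
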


Let us explain the second result of the present paper. 
We introduce the following notations: 
\begin{align*}
\delta(C)&:=\max\{t\in \mathbb{N}\mid \forall w, 
C_{w} \mbox{ is a } t\mbox{-design}\},\\ 
s(C)&:=\max\{t\in \mathbb{N}\mid \exists w \mbox{ s.t.~} 
C_{w} \mbox{ is a } t\mbox{-design}\}.
\end{align*}
We remark that $\delta(C) \leq s(C)$ holds. 
In our previous papers 
\cite{MN-St-2,{BMN},{extremal design H-M-N}, Miezaki-Munemasa-Nakasora, extremal design2 M-N,support design triply even code 48 M-N, dual support designs,{MN-AAECC}}, 
we considered the possible occurrence of $\delta(C)<s(C)$.
This was motivated by Lehmer's conjecture, which is 
analogous to $\delta(C)<s(C)$ in the theory of lattices 
and vertex operator algebras. 
For the details, 
see \cite{{BM1},{BM2},{BMY},{Lehmer},{Miezaki},{Miezaki2},{Miezaki-Munemasa-Nakasora},
{Venkov},{Venkov2}}. 
In the present paper, we give more examples of $\delta(C)<s(C)$ 
and show the existence of an interesting support $2$-design.




If $n \geq 72$ ($n \geq 40$), there is no near-extremal even formally self-dual (Type I) code of 
length $n \equiv 0 \pmod{8}$ \cite{han-kim}. 
It is not known whether there exists a near-extremal even formally self-dual 
$[48, 24, 12]$ code that is not Type II. 
Furthermore, the existence of near-extremal even formally self-dual $[56, 28, 14]$, $[64, 32, 16]$ codes is open.
It is known that there are exactly 144 inequivalent formally self-dual even $[16,8,4]$ codes,
one of which is Type I and two of which are Type II \cite{B-H}.
Hence, the Type I $[16,8,4]$ code is unique. 
The following theorem 
gives a strengthening of Theorem~\ref{thm:near} for some particular cases. 
\begin{Thm}\label{thm:ex1 2-design}
\begin{enumerate}
\item[{\rm (1)}] 
Let $C$ be a near-extremal Type I code of length $n \equiv 0 \pmod{8}$.
The case of $\delta(C)<s(C)$ occurs 
if and only if $C$ is the unique near-extremal Type I $[16,8,4]$ code. 
Then $C_{6}$ is a $2$-$(16,6,8)$ design, and $C_{10}$ is the complement design of $C_{6}$.

\item[{\rm (2)}] Let $C'$ be a near-extremal even formally self-dual code of length $16$.
Then $C'_{6}\cup C'^{\perp}_{6}$ $($also $C'_{10}\cup C'^{\perp}_{10}$$)$ is a $2$-design.

\item[{\rm (3)}] If there exists a near-extremal even formally self-dual code $C''$ of length $64$, 
then $C''_{28} \cup C''^{\perp}_{28}$ $($also $C''_{36} \cup C''^{\perp}_{36}$$)$ is a $2$-design.
\end{enumerate}

\end{Thm}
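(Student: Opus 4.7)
The plan is to extend the harmonic weight enumerator technique used for Theorem~\ref{thm:near} from degree $k=1$ to degree $k=2$. For a harmonic function $f\in\Harm_k$ one defines
\[
W_{C,f}(x,y)=\sum_{c\in C}\tilde f(\supp(c))\,x^{n-\wt(c)}y^{\wt(c)},
\]
and $C_w$ is a $t$-design if and only if $[x^{n-w}y^w]W_{C,f}=0$ for every $f\in\Harm_k$ and every $1\le k\le t$. Since Theorem~\ref{thm:near} already supplies the $k=1$ vanishing for each of the codes under consideration, it suffices to establish the $k=2$ vanishing at the distinguished weights appearing in parts~(1)--(3).

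For each relevant pair $(n,k)$, a Bachoc-type MacWilliams identity forces $W_{C,f}/(xy)^k$ to lie in the invariant space of a finite matrix group: the Mallows--Sloane group in the Type~I case, or its analogue for even formally self-dual codes applied to the symmetrized enumerator $W_{C',f}+W_{C'^\perp,f}$. I would first determine the dimension of this invariant space in degree $n-2k$, then impose the linear constraints $[x^{n-w}y^w]W_{C,f}=0$ for $0<w<d$ coming from the near-extremal minimum distance $d=2\lfloor n/8\rfloor$, together with $[x^n]W_{C,f}=0$ (since $\tilde f(\emptyset)=0$ whenever $k\ge 1$). For $n=16$, $k=2$ in parts~(1) and~(2), and $n=64$, $k=2$ in part~(3), I expect these constraints to cut the invariant space down to zero, forcing vanishing at $w=6,10$ and $w=28,36$ respectively. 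The $2$-design parameters $2$-$(16,6,8)$ in part~(1) then read off from the known weight distribution of the unique Type~I $[16,8,4]$ code, and the fact that $C_{10}$ is the complement design of $C_{6}$ is automatic once the all-ones vector lies in $C$, since then $\wt(c)+\wt(\mathbf{1}+c)=16$.

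For the ``only if'' direction of part~(1), by \cite{han-kim} the only remaining near-extremal Type~I lengths divisible by $8$ are $n\in\{8,24,32\}$; at each of these I would exhibit an explicit $f\in\Harm_2$ and a weight $w$ with $[x^{n-w}y^w]W_{C,f}\ne 0$ (for $n=8$ the unique code has weight-$2$ support consisting of four disjoint pairs, which is visibly not a $2$-design), appealing to the known classification of near-extremal Type~I codes at $n=24,32$ for the remaining cases. The main technical obstacle is the explicit dimension count and the vanishing verification in part~(3) at $n=64$, where the invariant space is substantially larger than at $n=16$ and the near-extremal linear constraints alone may leave nontrivial residual freedom; closing that gap may require exploiting the doubly-even subcode of $C''$ or divisibility properties of the intermediate weight multiplicities.
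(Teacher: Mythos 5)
Your overall framework (harmonic weight enumerators of degree $2$, Bachoc's invariant-theoretic description, linear constraints from near-extremality) is the right one and matches the paper's, but the mechanism you describe for the ``if'' direction is wrong in a way that breaks the proof. You say you expect the constraints coming from the minimum distance to ``cut the invariant space down to zero, forcing vanishing at $w=6,10$'' (resp.\ $w=28,36$). If the constraints really annihilated the whole space, then $Z_{C,f}$ would vanish identically for every $f\in\Harm_2$, so \emph{every} $C_w$ would be a $2$-design; this both proves too much (it is false for $w=4,8,12$ at $n=16$) and would actually give $\delta(C)\geq 2$, undercutting the claim $\delta(C)<s(C)$ in part~(1). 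What actually happens is that for $n=8m$ the relevant invariant space in Lemma~\ref{lem:Zc} is cut down to \emph{dimension one}, spanned by $(xy)^{2m}(x^4+2x^2y^2+y^4)(x^2-y^2)^{2m-2}$, and the selective vanishing at $w=6,10$ (for $m=2$) and $w=28,36$ (for $m=8$) is a numerical coincidence about the coefficients of this one surviving polynomial. That coincidence is exactly the content of the paper's Lemma~\ref{lem:poly. zero} (the coefficient of $x^{2\alpha+4-2i}y^{2i}$ in $(x^4+2x^2y^2+y^4)(x^2-y^2)^\alpha$ vanishes only for $(\alpha,i)=(2,1),(7,2),(14,6)$, checked by computer), and your proposal contains no substitute for it. Without identifying and verifying this fact, you cannot conclude that $C_6$ is a $2$-design, nor that $C_4$, $C_8$, $C_{12}$ are not, nor resolve the $n=64$ case --- which you yourself flag as an open gap (``may leave nontrivial residual freedom'').

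Two smaller points. First, for the ``only if'' direction your plan to fall back on the classification at the remaining lengths $n=8,24,32$ is consistent with the paper, which also resorts to a {\sc Magma} check over the database of self-dual codes; but note that the one-dimensionality argument plus Lemma~\ref{lem:poly. zero} already shows that for $m\notin\{2,8\}$ no coefficient of the surviving polynomial vanishes, so no weight can be a $2$-design unless $Z_{C,f}=0$ for all $f\in\Harm_2$ --- a cleaner route than hunting for explicit harmonic functions. Second, your derivation of the design parameters $2$-$(16,6,8)$ from the weight distribution of the unique Type~I $[16,8,4]$ code and the complement relation via $\mathbf{1}\in C$ are fine.
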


We note that there are at least $10^{8}$ non-isomorphic 
$2$-$(16,6,8)$ designs \cite{CRC-MR}.
A $t$-design is called self-orthogonal if the block intersection numbers 
have the same parity as the block size $k$ \cite{Tonchev2}. 
In Theorem \ref{thm:ex1 2-design} (1), $C_{6}$ is a self-orthogonal design.
Let $C$ be the near-extremal Type I $[16,8,4]$ code. 
Then $C_{4}$, $C_{8}$, and $C_{12}$ are not generated by $C$.
We show the following theorem.
\begin{Thm}\label{thm:unique}
There exists the unique self-orthogonal $2$-$(16,6,8)$ design 
$D$ up to isomorphism, and $D$ generates the near-extremal Type I 
$[16,8,4]$ code. 
\end{Thm}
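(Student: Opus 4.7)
Existence follows from Theorem~\ref{thm:ex1 2-design}(1): the support design $C^*_6$ of the unique near-extremal Type I $[16,8,4]$ code $C^*$ is a self-orthogonal $2$-$(16,6,8)$ design, since $C^*$ is self-dual and so any two weight-$6$ codewords meet evenly. For uniqueness (and the generation claim), the plan is to take an arbitrary self-orthogonal $2$-$(16,6,8)$ design $D$, form the binary code $C_D\subseteq\mathbb{F}_2^{16}$ spanned by the characteristic vectors of its $64$ blocks, and show $C_D = C^*$; since $C^*$ has exactly $64$ weight-$6$ codewords, the blocks of $D$ must then coincide with $C^*_6$.

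First I would record that $C_D$ is self-orthogonal (blocks have even weight $6$ and meet evenly) and has minimum weight $\ge 4$. The minimum-weight claim follows from a $2$-design count: if $e_i+e_j \in C_D$, every block would meet $\{i,j\}$ evenly, but only $\lambda+(b-2r+\lambda)=8+24=32$ of the $b=64$ blocks do. The same argument applied to $C_D^\perp$ gives no weight-$2$ codewords in $C_D^\perp$, and together with $\mathbf{1}\in C_D^\perp$ this rules out weight-$14$ codewords in $C_D$. Hence the nonzero weights of $C_D$ lie in $\{4,6,8,10,12,16\}$.

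The main step is to show $\dim C_D = 8$. The auxiliary code $\tilde C := C_D+\langle\mathbf{1}\rangle$ is self-orthogonal with minimum weight $\ge 4$; it contains $\mathbf{0}$, $\mathbf{1}$, the $64$ blocks, and their $64$ complements, so $|\tilde C|\ge 130>128$, forcing $\dim\tilde C = 8$ and $\tilde C$ self-dual. Containing weight-$6$ codewords, $\tilde C$ is not doubly even, so it is Type I, and by uniqueness $\tilde C=C^*$. To exclude the remaining possibility $\dim C_D=7$ (equivalently $\mathbf{1}\notin C_D$), I would use the coset decomposition $C^* = C_D\sqcup(C_D+\mathbf{1})$ and the standard weight enumerator $(A_0,A_4,A_6,A_8,A_{10},A_{12},A_{16})=(1,12,64,102,64,12,1)$ of $C^*$ to pin down $A_6(C_D)=64$, $A_{10}(C_D)=0$, $A_8(C_D)=51$, and $A_4(C_D)+A_{12}(C_D)=12$. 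Reading off the coefficient of $x^{15}y$ from MacWilliams,
\[
W_{C_D^\perp}(x,y)=\tfrac{1}{128}\,W_{C_D}(x+y,x-y),
\]
then gives
\[
A_1(C_D^\perp)=\frac{11+A_4(C_D)}{8}>0,
\]
so some $e_i\in C_D^\perp$; but this says the point $i$ lies in no block, contradicting $r=24$.

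This MacWilliams/positivity computation is the main obstacle; everything else is bookkeeping. Once $\dim C_D = 8$ is established, $C_D$ is a self-dual $[16,8,\ge 4]$ code containing a weight-$6$ codeword, hence Type I, hence $C_D = C^*$ by the uniqueness recalled in the excerpt. The $64$ blocks of $D$ then exhaust $C^*_6$, so $D = C^*_6$ both generates $C^*$ and is unique up to isomorphism.
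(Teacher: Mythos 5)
Your proposal is correct and follows the same overall strategy as the paper: pass to the binary code spanned by the $64$ blocks, show it is forced to be the unique Type I $[16,8,4]$ code, and conclude that the blocks must coincide with its weight-$6$ codewords. The difference lies in how self-duality of that code is established. The paper first computes the block intersection numbers $m_0=3$, $m_2=51$, $m_4=9$ via Mendelsohn's equations, deduces that $C^{\perp}$ has minimum weight exactly $4$, and then obtains $\dim C\ge 8$ from the count $1+|C_6|+|C_{10}|+1>2^7$, i.e.\ by counting the complements of the blocks inside $C$ itself. You instead adjoin $\mathbf{1}$ explicitly, count $130>128$ vectors in $\tilde C=C_D+\langle\mathbf{1}\rangle$ to force $\dim\tilde C=8$, and then eliminate the residual case $\dim C_D=7$ with a MacWilliams computation; your arithmetic is right ($A_6=64$, $A_8=51$, $A_{10}=A_{14}=A_{16}=0$, $A_4+A_{12}=12$ give $A_1(C_D^{\perp})=(11+A_4)/8>0$, contradicting $r=24$). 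Your version is longer, but it makes fully explicit why the weight-$10$ vectors are available for the dimension count, a point the paper's inequality leaves terse.

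One small patch is needed: to invoke uniqueness of the Type I $[16,8,4]$ code you must know that the minimum distance of $\tilde C$ (equivalently of $C_D$) is exactly $4$, not merely $\ge 4$; nothing in your write-up rules out a self-dual $[16,8,6]$ code at that stage, and the weight enumerator $(1,12,64,102,64,12,1)$ you use afterwards presupposes the identification with $C^{*}$. The paper gets the required weight-$4$ codeword for free from its Proposition on block intersection numbers ($m_4=9>0$): two blocks meeting in four points sum to a codeword of weight $4$. Adding that one observation (or citing the nonexistence of self-dual $[16,8,6]$ codes) closes the gap.
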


Appendix B in our previous paper \cite{dual support designs} lists the cases where
$\delta(C) < s(C)$ occurs for $4$-weight binary codes.
An example of these cases is given by the unique Type I $[16,8,4]$ code.
\begin{Cor}\label{cor:main1}
Let $C$ be the unique Type I $[16,8,4]$ code. 
Let $C_{0}$ be a doubly-even subcode of $C$. 
Then $C_{0}$ is an example of $n=16, d=4, w=6,10$ 
in \cite[Appendix B. Table of $(d^{\perp},t)=(4,1)$]{dual support designs}.

\end{Cor}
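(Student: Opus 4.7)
The plan is to identify $C_0$ concretely and then match its parameters to the appendix row. Since $C$ is the unique Type I $[16,8,4]$ code, the doubly-even subcode $C_0 = \{c \in C : \wt(c) \equiv 0 \pmod{4}\}$ has index $2$ in $C$, hence is a $[16,7,4]$ binary code. A Gleason-invariant calculation in the ring generated by $\theta_1 = x^2 + y^2$ and $\theta_8 = x^2 y^2 (x^2-y^2)^2$, combined with $A_0 = 1$, $A_2 = 0$, and the value $A_6 = 64$ forced by the $2$-$(16,6,8)$ design of Theorem~\ref{thm:ex1 2-design}(1), determines $W_C$ uniquely with weight distribution $(A_0, A_4, A_6, A_8, A_{10}, A_{12}, A_{16}) = (1, 12, 64, 102, 64, 12, 1)$. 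Restricting to weights divisible by $4$ shows that $C_0$ has exactly four nonzero weights $\{4, 8, 12, 16\}$, so it is a $4$-weight code as required.

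Next I would determine $d^{\perp}(C_0)$ and the relevant part of $W_{C_0^{\perp}}$. The chain $C_0 \subset C \subset C_0^{\perp}$ (by self-orthogonality of $C_0$ and self-duality of $C$) gives the set-theoretic decomposition $C_0^{\perp} = C \sqcup S$, where $S$ is the shadow of $C$. By the Conway--Sloane parity theorem for shadows of Type I binary codes of length $n \equiv 0 \pmod 8$, every shadow weight is $\equiv n/2 \pmod 4$, hence divisible by $4$ in our case. Consequently $S$ contains no vector of weight less than $4$, giving $d^{\perp}(C_0) = 4$; and $S$ contains no vector of weight $6$ or $10$, so $(C_0^{\perp})_w = C_w$ for $w \in \{6, 10\}$.

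Finally I apply Assmus--Mattson to $C_0^{\perp}$ at $t = 1$: the weights of $C_0$ in $[1, n-1] = [1, 15]$ form the set $\{4, 8, 12\}$ of size $3 = d^{\perp}(C_0) - 1$, meeting the threshold with equality, while the analogous count for $t = 2$ is $3 > 4 - 2 = 2$ and fails. Hence Assmus--Mattson produces $1$-designs (and only $1$-designs) from supports of $C_0^{\perp}$, placing $C_0$ in the row $(d^{\perp}, t) = (4, 1)$ of the appendix table. Theorem~\ref{thm:ex1 2-design}(1) then upgrades the situation at $w = 6, 10$: $C_6$ is a $2$-$(16, 6, 8)$ design and $C_{10}$ is its complement, and through the identification $(C_0^{\perp})_w = C_w$ these dual support designs of $C_0$ are in fact $2$-designs, matching the columns $n = 16$, $d = 4$, $w \in \{6, 10\}$ exactly. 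The only non-routine ingredient is the shadow-parity lemma, which both pins down $d^{\perp}(C_0) = 4$ and delivers the decisive identification $(C_0^{\perp})_w = C_w$ at $w = 6, 10$; once it is in hand, combining Assmus--Mattson with Theorem~\ref{thm:ex1 2-design}(1) closes the argument.
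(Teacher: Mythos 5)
Your proposal is correct, and it follows the route the paper clearly intends (the paper states Corollary~\ref{cor:main1} without any explicit proof, treating it as an immediate consequence of Theorem~\ref{thm:ex1 2-design}(1) and the definition of the table in \cite{dual support designs}). What you add, and what the paper leaves entirely implicit, is the verification that $C_{0}$ genuinely qualifies as a table entry: that $C_{0}$ is a $4$-weight $[16,7,4]$ code whose weights $\{4,8,12\}$ meet the Assmus--Mattson threshold for $t=1$ but not $t=2$, and --- the one genuinely non-obvious point --- that $(C_{0}^{\perp})_{w}=C_{w}$ for $w=6,10$, which you obtain from the Conway--Sloane shadow decomposition $C_{0}^{\perp}=C\sqcup S$ together with the congruence of shadow weights to $n/2\pmod 4$. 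That identification is exactly what is needed to transfer the $2$-design property of $C_{6}$ and $C_{10}$ from Theorem~\ref{thm:ex1 2-design}(1) to the dual support designs of $C_{0}$, so your argument is complete; the only cosmetic caveat is that the explicit weight distribution $(1,12,64,102,64,12,1)$ is not strictly needed beyond knowing that $C_{0}$ consists of the doubly-even codewords and contains words of weight $4$.
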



All computer calculations in this paper were done with the help of 
{\sc Magma}~\cite{Magma} and {\sc Mathematica}~\cite{Mathematica}. 


\section{Preliminaries}\label{sec:pre}

\subsection{Background material and terminology}\label{sec:terminology}

Let $\GF_q$ be the finite field of $q$ elements. 
A binary linear code $C$ of length $n$ is a subspace of $\GF_2^n$. 
An inner product $({x},{y})$ on $\FF_2^n$ is given 
by
\[
(x,y)=\sum_{i=1}^nx_iy_i,
\]
where $x,y\in \FF_2^n$ with $x=(x_1,x_2,\ldots, x_n)$ and 
$y=(y_1,y_2,\ldots, y_n)$. 
The duality of a linear code $C$ is defined as follows: 
\[
C^{\perp}=\{{y}\in \FF_{2}^{n}\ | \ ({x},{y}) =0\ \mbox{ for all }{x}\in C\}.
\]
A linear code $C$ is self-dual 
if $C=C^{\perp}$. 
For $x \in\FF_2^n$,
the weight $\wt(x)$ is the number of its nonzero components. 
The minimum distance of code $C$ is 
$\min\{\wt( x)\mid  x \in C,  x \neq  0 \}$. 
A linear code of length $n$, dimension $k$, and 
minimum distance $d$ is called an $[n,k,d]$ code 
(or $[n,k]$ code), and 
the dual code is called an $[n,n-k,d^{\perp}]$ code. 
{

A $t$-$(v,k,{\lambda})$ design (or $t$-design for short) is a pair 
$\mathcal{D}=(X,\mathcal{B})$, where $X$ is a set of points of 
cardinality $v$, and $\mathcal{B}$ is a collection of $k$-element subsets
of $X$ called blocks, with the property that any $t$ points are 
contained in precisely $\lambda$ blocks.

The support of a nonzero vector ${x}:=(x_{1}, \dots, x_{n})$, 
$x_{i} \in \GF_{2} = \{ 0,1 \}$ is 
the set of indices of its nonzero coordinates: ${\rm supp} ({ x}) = \{ i \mid x_{i} \neq 0 \}$\index{$supp (x)$}. 
The support design of a code of length $n$ for a given nonzero weight $w$ is the design 
with points $n$ of coordinate indices and blocks the supports for all codewords of weight $w$.

The following theorem is from Assmus and Mattson \cite{assmus-mattson}; it is one of the 
most important theorems in coding theory and design theory.
\begin{Thm}[\cite{assmus-mattson}] \label{thm:assmus-mattson}
Let $C$ be a binary $[n,k,d]$ linear code and $C^{\bot}$ be the $[n,n-k,d^{\bot}]$ dual code. 
Let $t$ be an integer less than $d$.  
Let $C$ have at most $d^{\bot}-t$ non-zero weights less than or equal to $n-t$. Then 
for each weight $u$ with $d \leq u \leq n-t$, the support design in $C$ is a $t$-design, 
and for each weight $w$ with $d^{\bot} \leq w \leq n$, the support design 
in $C^{\bot}$ is a $t$-design.
\end{Thm}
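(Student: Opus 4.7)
The plan is to prove the Assmus--Mattson theorem via MacWilliams duality combined with a puncturing argument. Fix a weight $u$ in the admissible range, and for any $t$-subset $T \subseteq \{1,\dots,n\}$ let
\[
\lambda_u(T) := \#\{c \in C : \wt(c) = u, \ T \subseteq \supp(c)\}.
\]
The goal is to show $\lambda_u(T)$ depends only on $|T|=t$; once this is established, the weight-$u$ supports form a $t$-design with constant $\lambda = \lambda_u(T)$. The analogous statement for $C^{\bot}$ then follows by a parallel argument after interchanging the roles of $C$ and $C^{\bot}$.

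The key tool is the split (biweight) enumerator relative to the partition $[n] = T \sqcup T^c$:
\[
W^T_C(a,b;x,y) = \sum_{c \in C} a^{t-\wt(c|_T)} b^{\wt(c|_T)} x^{(n-t)-\wt(c|_{T^c})} y^{\wt(c|_{T^c})},
\]
to which the block-wise MacWilliams transform applies, yielding
\[
W^T_C(a,b;x,y) = \frac{1}{|C^{\bot}|}\, W^T_{C^{\bot}}(a+b, a-b;\, x+y, x-y).
\]
Specializing $(a,b)=(0,1)$ on the left isolates codewords of $C$ whose restriction to $T$ is the all-ones vector; extracting the coefficient of $x^{n-u} y^{u-t}$ produces exactly $\lambda_u(T)$. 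On the right, the same specialization gives a Krawtchouk-type expansion in which each $c \in C^{\bot}$ contributes a sign $(-1)^{\wt(c|_T)}$ together with a polynomial depending only on $r := \wt(c|_{T^c})$, so that
\[
\lambda_u(T) = \sum_{r} \alpha_{u,r}\, B^T_r,
\]
where the $\alpha_{u,r}$ are $T$-independent Krawtchouk coefficients and $B^T_r$ encodes a signed count over $C^{\bot}$.

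The second ingredient is a second MacWilliams identity coming from puncturing. Since $t<d$, puncturing $C$ on $T$ is injective, so the punctured code $C^{\flat}_T$ has dimension $k$ and its dual $(C^{\flat}_T)^{\bot}$ (the shortening of $C^{\bot}$ on $T$) has dimension $n-k-t$; MacWilliams on this pair expresses the weight distribution of $(C^{\flat}_T)^{\bot}$ in terms of that of $C^{\flat}_T$, which in turn is controlled by the weight distribution of $C$. Combining this with the hypothesis that $C$ has at most $d^{\bot}-t$ non-zero weights in $[1, n-t]$ gives a linear system whose solution expresses the $B^T_r$ purely in terms of the global weight distributions of $C$ and $C^{\bot}$, forcing $\lambda_u(T)$ to be $T$-independent.

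The main obstacle is a linear-algebra step: one must verify that the Krawtchouk-style coefficient matrix arising from the restricted collection of admissible weights is invertible, so that the $B^T_r$ are uniquely determined by $T$-invariant data. This non-singularity is exactly what the numerical hypothesis ``at most $d^{\bot}-t$ non-zero weights $\le n-t$'' is designed to guarantee, since it matches the number of free parameters to the number of independent constraints. Once this is in place, the statement for weights $w$ in $C^{\bot}$ follows by rerunning the same argument with $C$ and $C^{\bot}$ swapped, and the extension to weights $w > n-t$ in $C^{\bot}$ (if needed) is handled by the trivial observation that the complement of any weight-$w$ support is a weight-$(n-w)$ vector in the same code only when the all-ones vector lies in $C^{\bot}$, or else by a direct check that such weights contribute only the single (possibly empty) block.
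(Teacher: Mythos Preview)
The paper does not prove this theorem at all: Theorem~\ref{thm:assmus-mattson} is simply quoted from \cite{assmus-mattson} as background, with no argument supplied. So there is no ``paper's own proof'' to compare your attempt against.

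As for your outline itself, it follows the standard MacWilliams/linear-algebra route to Assmus--Mattson and is broadly correct in spirit, but two places are not yet rigorous. First, the symmetry step ``rerun the same argument with $C$ and $C^{\bot}$ swapped'' needs justification: the hypothesis is stated asymmetrically (a bound on the number of nonzero weights of $C$, not of $C^{\bot}$), so you must explain why, once you know the weight-$u$ supports in $C$ form $t$-designs, the MacWilliams relations force the analogous conclusion for $C^{\bot}$ without re-imposing a weight-count hypothesis on $C^{\bot}$. Second, your final paragraph about weights $w>n-t$ in $C^{\bot}$ is muddled: taking complements does not in general stay inside the same code, and the parenthetical about the all-ones vector is not the right mechanism. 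In the classical proof these large weights are handled directly by the same linear system (the constraints already cover all nonzero weights of $C^{\bot}$ once the invertibility step goes through), so you should fold that case into the main argument rather than treating it as an afterthought.
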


\subsection{The harmonic weight enumerators}\label{sec:weight enumerators}

A striking generalization of the MacWilliams identity was 
obtained by Bachoc~\cite{Bachoc}, 
who originated the concept of harmonic weight enumerators $W_{C,f}$ 
associated with a harmonic function $f$ of degree $d$, 
and a generalization of the MacWilliams identity. 
For the reader's convenience, we quote from~\cite{Bachoc,Delsarte}
the definitions and properties of discrete harmonic functions 
(for more information the reader is referred to~\cite{Bachoc,Delsarte}).

Let $\Omega=\{1, 2,\ldots,n\}$ be a finite set (which will be the set of coordinates of the code) and 
let $X$ be the set of its subsets, while for all $k= 0,1, \ldots, n$, $X_{k}$ is the set of its $k$-subsets.
We denote by $\R X$, $\R X_k$ the free real vector spaces spanned by, respectively, the elements of $X$, $X_{k}$. 
An element of $\R X_k$ is denoted by
$$f=\sum_{z\in X_k}f(z)z$$
and is identified with the real-valued function on $X_{k}$ given by 
$z \mapsto f(z)$. 

Such an element $f\in \R X_k$ can be extended to an element $\widetilde{f}\in \R X$ by setting, for all $u \in X$,
$$\widetilde{f}(u)=\sum_{z\in X_k, z\subset u}f(z).$$
If an element $g \in \R X$ is equal to some $\widetilde{f}$, for $f \in \R X_{k}$, we say that $g$ has degree $k$. 
The differentiation $\gamma$ is the operator defined by linearity from 
$$\gamma(z) =\sum_{y\in X_{k-1},y\subset z}y$$
for all $z\in X_k$ and for all $k=0,1, \ldots n$, and $\Harm_{k}$ is the kernel of $\gamma$:
$$\Harm_k =\ker(\gamma|_{\R X_k}).$$

\begin{Thm}[{{\cite[Theorem 7]{Delsarte}}}]\label{thm:design}
A set $\mathcal{B} \subset X_{m}$, where $m \leq n$, of blocks is a $t$-design 
if and only if $\sum_{b\in \mathcal{B}}\widetilde{f}(b)=0$ 
for all $f\in \Harm_k$, $1\leq k\leq t$. 
\end{Thm}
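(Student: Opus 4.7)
The plan is to translate both directions into a statement about the intersection-number functions
\[
\lambda_k(z) := \#\{b \in \mathcal{B} : z \subset b\}, \qquad z \in X_k,
\]
viewed as elements $\lambda_k \in \R X_k$. Equipping each $\R X_j$ with the standard inner product $\la u, v \ra = \sum_{y \in X_j} u(y)v(y)$, switching the order of summation gives
\[
\sum_{b \in \mathcal{B}} \widetilde{f}(b) = \sum_{b \in \mathcal{B}} \sum_{z \subset b,\, |z|=k} f(z) = \la \lambda_k, f \ra,
\]
so the hypothesis of the theorem is exactly that $\lambda_k \perp \Harm_k$ for each $1 \leq k \leq t$. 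On the other hand, $\mathcal{B}$ being a $t$-design is equivalent to $\lambda_t$ being constant on $X_t$.

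The forward direction is then almost immediate: if $\mathcal{B}$ is a $t$-design, each $\lambda_k$ with $k \leq t$ is a constant, and for $f \in \Harm_k$ with $k \geq 1$ a one-line computation (sum $\gamma(f)(y) = 0$ over $y \in X_{k-1}$) yields $\sum_{z \in X_k} f(z) = 0$, hence $\la \lambda_k, f \ra = 0$. For the converse, I would invoke the orthogonal decomposition
\[
\R X_j = \bigoplus_{i=0}^{j} (\gamma^*)^{j-i}\, \Harm_i,
\]
where $\gamma^* : \R X_i \to \R X_{i+1}$ is the adjoint raising operator sending a basis vector $y$ to $\sum_{z \supset y,\, |z|=|y|+1} z$. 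A direct double count links the intersection numbers at different levels by
\[
\gamma^{t-k}(\lambda_t) = (t-k)!\binom{m-k}{t-k}\,\lambda_k,
\]
and the adjointness identity $\la \gamma^{t-k}(\lambda_t), f \ra = \la \lambda_t, (\gamma^*)^{t-k} f \ra$ recasts the hypothesis at level $k$ as $\lambda_t \perp (\gamma^*)^{t-k} \Harm_k$. Letting $k$ range over $1, \dots, t$ kills every summand with $i \geq 1$ in the decomposition of $\R X_t$, so $\lambda_t$ lies in the one-dimensional subspace $(\gamma^*)^t \Harm_0 = \R \cdot \mathbf{1}_{X_t}$; hence $\lambda_t$ is constant and $\mathcal{B}$ is a $t$-design.

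The main obstacle is the harmonic decomposition of $\R X_j$ itself. The cleanest route is to first verify $\Harm_j = (\mathrm{im}\,\gamma^*)^\perp$ inside $\R X_j$ by a one-line adjointness computation, iterate this to obtain the orthogonal direct sum, and then check that $(\gamma^*)^{j-i}$ is injective on $\Harm_i$ so the summands are genuinely independent. This injectivity follows from the combinatorial identity that $\gamma\gamma^*$ acts on $\Harm_i \subset \R X_i$ as multiplication by a strictly positive scalar, which can be verified directly without invoking the $S_n$-isotypic decomposition behind Delsarte's original proof.
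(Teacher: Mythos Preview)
The paper does not prove this statement at all: Theorem~\ref{thm:design} is quoted from Delsarte~\cite{Delsarte} purely as background for Bachoc's harmonic weight enumerators, and is used only as a black box thereafter. There is therefore no proof in the paper to compare your argument against.

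For what it is worth, your outline is the standard argument and is sound. One small point: the claim that $(\gamma^*)^{j-i}$ is injective on $\Harm_i$ does not follow directly from ``$\gamma\gamma^*$ acts on $\Harm_i$ as a positive scalar''; that only gives injectivity of a single $\gamma^*$. The clean way through is the commutator identity $\gamma\gamma^* - \gamma^*\gamma = (n-2k)\,\mathrm{id}$ on $\R X_k$, which lets you push $\gamma$ past powers of $\gamma^*$ (this is the $\mathfrak{sl}_2$ structure you were trying to avoid, but it really is the most economical route). Alternatively, note that for the converse you do not need the direct-sum decomposition at all: the orthogonal splitting $\R X_t = \Harm_t \oplus \gamma^*(\R X_{t-1})$ iterated downward already shows that $\sum_{i\geq 1}(\gamma^*)^{t-i}\Harm_i$ has codimension at most one in $\R X_t$, and since $\mathbf{1}_{X_t}$ is orthogonal to every such summand (because $\gamma^{t-i}\mathbf{1}_{X_t}$ is a scalar multiple of $\mathbf{1}_{X_i}$ and $\la \mathbf{1}_{X_i}, f\ra = 0$ for $f\in\Harm_i$ with $i\geq 1$), your hypothesis forces $\lambda_t\in\R\,\mathbf{1}_{X_t}$ with no injectivity needed.
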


In \cite{Bachoc}, the harmonic weight enumerator associated with a binary linear code $C$ was defined as follows. 
\begin{Def}
Let $C$ be a binary code of length $n$ and let $f\in\Harm_{k}$. 
The harmonic weight enumerator associated with $C$ and $f$ is

$$W_{C,f}(x,y)=\sum_{{c}\in C}\widetilde{f}({c})x^{n-\wt({c})}y^{\wt({c})}.$$
\end{Def}

Bachoc proved the following MacWilliams-type equality. 
\begin{Thm}[\cite{Bachoc}] \label{thm: Bachoc iden.} 
Let $W_{C,f}(x,y)$ be 
the harmonic weight enumerator associated with the code $C$ 
and the harmonic function $f$ of degree $k$. Then 
$$W_{C,f}(x,y)= (xy)^{k} Z_{C,f}(x,y)$$ ,
where $Z_{C,f}$ is a homogeneous polynomial of degree $n-2k$ that satisfies
$$Z_{C^{\bot},f}(x,y)= (-1)^{k} \frac{2^{n/2}}{|C|} Z_{C,f} \left( \frac{x+y}{\sqrt{2}}, \frac{x-y}{\sqrt{2}} \right).$$
\end{Thm}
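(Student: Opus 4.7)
I would establish the theorem in two stages: first, the divisibility $(xy)^k \mid W_{C,f}(x,y)$ (which makes $Z_{C,f}$ a well-defined homogeneous polynomial of degree $n-2k$), and second, the MacWilliams-type functional equation via Poisson summation on $\FF_2^n$.

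For divisibility, the plan is to show $\widetilde{f}(c)=0$ whenever $\wt(c)<k$ or $\wt(c)>n-k$. The low-weight case is immediate: if $|\supp(c)|<k$ there are no $k$-subsets of $\supp(c)$, so the defining sum is empty. The high-weight case is the substantive one. Setting $v:=\Omega\setminus\supp(c)$ and writing $\bar z:=\Omega\setminus z$, it reduces to the identity $\sum_{z\in X_k,\,z\cap v=\emptyset} f(z)=0$ for $|v|<k$ and $f\in\Harm_k$. Using the orthonormal basis of $k$-subsets to identify $\R X_k$ with its dual, and the decomposition $\R X_k=\Harm_k\oplus\mathrm{im}\,\gamma^*$, where $\gamma^*(y)=\sum_{z\supset y,\,|z|=k} z$, it suffices to show that $\sum_{z\cap v=\emptyset} z\in\mathrm{im}\,\gamma^*$. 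Inclusion--exclusion writes this vector as a combination of vectors $(\gamma^*)^{k-|S|}(S)$ with $S\subset v$ and $|S|\leq|v|<k$, each of which lies in $\mathrm{im}\,\gamma^*$, so the pairing against $f$ vanishes.

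For the functional equation, I would apply Poisson summation to $g(d):=\widetilde{f}(d)\,x^{n-\wt(d)}y^{\wt(d)}$ in the form $\sum_{c\in C^\perp} g(c)=\frac{1}{|C|}\sum_{c\in C}\widehat{g}(c)$, where $\widehat{g}(c)=\sum_{d\in\FF_2^n}(-1)^{(c,d)}g(d)$. Expanding $\widetilde{f}(d)$ and factoring the inner character sum coordinate-by-coordinate over $\FF_2^{\bar z}$, one obtains
$$\widehat{g}(c)=y^k\sum_{z\in X_k}f(z)(-1)^{\wt(c|_z)}(x+y)^{n-k-\wt(c|_{\bar z})}(x-y)^{\wt(c|_{\bar z})}.$$
The key manipulation is to rewrite the summand as $(x+y)^{n-\wt(c)}(x-y)^{\wt(c)}\prod_{i\in z}T_i$, where $T_i=1/(x+y)$ if $c_i=0$ and $T_i=-1/(x-y)$ if $c_i=1$, and then to decompose $T_i=T_i'+s$ with $s=1/(x+y)$, so that $T_i'=0$ for $c_i=0$ and $T_i'=-2x/(x^2-y^2)$ for $c_i=1$. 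Expanding $\prod_{i\in z}(T_i'+s)$ and interchanging orders, every contribution indexed by a subset $A\subsetneq z$ produces an inner sum $\sum_{z\supset A,\,|z|=k}f(z)$ with $|A|<k$, which vanishes by the same image-of-$\gamma^*$ argument as in the first stage. Only the diagonal term $A=z\subset\supp(c)$ survives, collapsing $\widehat{g}(c)$ to $(-1)^k(2xy)^k\,\widetilde{f}(c)\,(x+y)^{n-\wt(c)-k}(x-y)^{\wt(c)-k}$.

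Summing over $c\in C$, factoring out $(-1)^k(xy)^k$, and recognizing the remainder as $2^{n/2-k}Z_{C,f}\bigl((x+y)/\sqrt{2},(x-y)/\sqrt{2}\bigr)$, Poisson summation yields $W_{C^\perp,f}(x,y)=\frac{(-1)^k 2^{n/2}(xy)^k}{|C|}\,Z_{C,f}\bigl((x+y)/\sqrt 2,(x-y)/\sqrt 2\bigr)$, and dividing by $(xy)^k$ produces the stated formula for $Z_{C^\perp,f}$. The main obstacle is this Fourier bookkeeping: organizing the $k$-subset sum so that harmonicity collapses it onto $\widetilde{f}(c)$ with the correct sign $(-1)^k$ and normalization $2^{n/2}/|C|$. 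Harmonicity of $f$ is used twice in essentially the same way---both in the divisibility argument and in this collapse---so the delicate choice of the shift $s=1/(x+y)$, which forces $T_i'=0$ exactly when $i\notin\supp(c)$, is what aligns the algebra and channels the Fourier transform onto $\widetilde{f}(c)$ evaluated at $c\in C$.
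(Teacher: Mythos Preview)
The paper does not give its own proof of this theorem: it is quoted from Bachoc's paper \cite{Bachoc} as background and used without argument. There is therefore nothing in the present paper to compare your attempt against.

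That said, your proposal is correct and is essentially the route Bachoc takes in the original reference. The two ingredients you isolate---(i) the vanishing of $\widetilde{f}(c)$ for $\wt(c)<k$ and $\wt(c)>n-k$, obtained from the orthogonal decomposition $\R X_k=\Harm_k\oplus\mathrm{im}\,\gamma^*$, and (ii) Poisson summation on $\FF_2^n$ combined with a coordinatewise factorisation of the character sum---are exactly the mechanism behind Bachoc's Theorem~2.1. Your ``shift'' trick $T_i=T_i'+s$ with $s=1/(x+y)$ is a clean way of packaging the same cancellation that in Bachoc's presentation is expressed via the Hahn/Krawtchouk polynomial relations; in both cases the point is that, after the Fourier transform, every term indexed by a proper subset $A\subsetneq z$ pairs $f$ against an element of $\mathrm{im}\,\gamma^*$ and hence vanishes, leaving only the diagonal contribution $(-1)^k(2xy)^k\widetilde{f}(c)(x+y)^{n-\wt(c)-k}(x-y)^{\wt(c)-k}$. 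The final homogeneity bookkeeping giving the constant $2^{n/2}/|C|$ is also right.
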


\subsection{Mendelsohn equations}\label{sec:men}

We recall the Mendelsohn equations, 
which were used in \cite{{harada},{harada-kitazume-munemasa}}.
In this paper they will be 
used in the proof of Theorem~\ref{thm:unique}. 
Let $C$ be the code generated by the rows of 
a block-point incidence matrix $A$ of a
$t$-$(v,k,\lambda)$ design $D=(X,\mathcal{B})$. 
Let $u \in C^{\perp}$ be a vector of weight $m>0$. 
Denote by $n_{i}$ the number of blocks of 
$D$ that meet $\supp(u)$ in exactly $i$ points. 
Then we have the following system of equations.
\begin{Thm}[\cite{{{Mendelsohn}},{Tonchev2}}]\label{thm:men}
\[
\sum_{i=0}^{\min\{k,m\}}\binom{i}{j}n_i
=\lambda_j\binom{m}{j}\ (j=0,1,\ldots,t). 
\]
\end{Thm}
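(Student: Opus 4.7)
The plan is to establish the identities by a double-counting argument on the quantity
$$N_j := \sum_{B \in \mathcal{B}} \binom{|B \cap \supp(u)|}{j}, \qquad j \in \{0,1,\ldots,t\}.$$
Two different evaluations of $N_j$ will give precisely the two sides of the Mendelsohn equation.

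For the first evaluation, I would group blocks according to the intersection size $i = |B \cap \supp(u)|$: since $|B| = k$ and $|\supp(u)| = m$, one has $0 \le i \le \min(k,m)$, and by definition there are $n_i$ blocks meeting $\supp(u)$ in exactly $i$ points. Hence
$$N_j = \sum_{i=0}^{\min(k,m)} \binom{i}{j}\, n_i,$$
which is the left-hand side of the claimed identity.

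For the second evaluation, I would interpret $\binom{|B \cap \supp(u)|}{j}$ as the number of $j$-subsets $T$ satisfying both $T \subseteq B$ and $T \subseteq \supp(u)$, and then swap the order of summation:
$$N_j = \sum_{\substack{T \subseteq \supp(u) \\ |T| = j}} \bigl|\{B \in \mathcal{B} : T \subseteq B\}\bigr|.$$
Because $D$ is a $t$-design and $j \le t$, it is automatically a $j$-design with a well-defined parameter $\lambda_j$ (via the standard identity $\lambda_j \binom{k-j}{t-j} = \lambda \binom{v-j}{t-j}$, obtained by counting pairs $T \subseteq T' \subseteq B$ with $|T|=j$ and $|T'|=t$). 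Thus the inner count equals $\lambda_j$ independently of $T$, and there are exactly $\binom{m}{j}$ admissible $j$-subsets $T$ of $\supp(u)$, giving $N_j = \lambda_j \binom{m}{j}$.

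Equating the two expressions for $N_j$ completes the proof. I do not foresee a substantive obstacle: the argument is a clean double count, and in fact the hypothesis $u \in C^{\perp}$ is not needed for deriving these identities — any $m$-subset of the point set would serve equally well in place of $\supp(u)$. The orthogonality condition $u \in C^{\perp}$ only becomes active in applications, where it furnishes additional parity/linearity constraints on the tuple $(n_i)$ that combine with the Mendelsohn equations to force strong structural conclusions, as in the proof of Theorem~\ref{thm:unique}.
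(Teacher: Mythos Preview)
Your double-counting argument is correct and is the standard proof of the Mendelsohn equations. Note, however, that the paper does not supply its own proof of this theorem: it is stated as a cited result from \cite{Mendelsohn,Tonchev2}, so there is no in-paper argument to compare against. Your observation that the hypothesis $u\in C^{\perp}$ plays no role in the identities themselves (any $m$-subset of $X$ works) is also accurate; as you say, the orthogonality is only used downstream to restrict the $n_i$ further.
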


\section{Proof of Theorem~\ref{thm:near}}

By Bachoc \cite[Corollary 2.2, Lemma 3.2]{Bachoc} we have the following Lemma 
for an even formally self-dual code.

\begin{Lem}\label{lem:Zc}
Let ${C}$ be an even formally self-dual code of length $n$ 
and $f\in {\rm Harm}_{t}$.

\begin{align*}
& Z_{{C},f}+ Z_{{C}^{\perp},f} \\
&=
\begin{cases}
\displaystyle Q_{8}\sum_{i=0}^{n/8} (x^{2}+y^{2})^{n/2-(t+4)-4i}(x^{2}y^{2}(x^{2}-y^{2})^{2})^{i},
\text{if $t$ is odd}\\
\displaystyle \sum_{i=0}^{n/8} (x^{2}+y^{2})^{n/2-t-4i}(x^{2}y^{2}(x^{2}-y^{2})^{2})^{i}, 
\text{if $t$ is even,}
\end{cases}
\end{align*}
where $Q_{8}=xy(x^{6}-7x^{4}y^{2}+7x^{2}y^{4}-y^{6})$.

\end{Lem}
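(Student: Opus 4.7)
\emph{Proof plan.} The strategy is to show that $P:=Z_{C,f}+Z_{C^{\perp},f}$ lies in a very small Gleason-type invariant subspace, forced by two symmetries coming from $C$ being even and formally self-dual.

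\textit{Step 1 (MacWilliams symmetry).} Since $C$ is even formally self-dual we have $|C|=2^{n/2}$, so Theorem~\ref{thm: Bachoc iden.} specializes to
\[
Z_{C^{\perp},f}(x,y)=(-1)^{t}\,Z_{C,f}\!\left(\tfrac{x+y}{\sqrt 2},\tfrac{x-y}{\sqrt 2}\right).
\]
Applying the MacWilliams substitution to $P$ and using that this substitution is an involution immediately gives $P\!\left((x+y)/\sqrt 2,(x-y)/\sqrt 2\right)=(-1)^{t}P(x,y)$.

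\textit{Step 2 (Evenness symmetry).} Because $C$ is even and $n$ is even, $W_{C,f}$ is a polynomial in $x^{2},y^{2}$. Combined with $W_{C,f}=(xy)^{t}Z_{C,f}$, this yields $Z_{C,f}(x,-y)=(-1)^{t}Z_{C,f}(x,y)$, and hence $P(x,-y)=(-1)^{t}P(x,y)$.

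\textit{Step 3 (Invariant theory).} Steps~1--2 say precisely that $P$ is a $\chi^{t}$-relative invariant of the order-$16$ reflection group $G$ generated by the MacWilliams transform and $\mathrm{diag}(1,-1)$, where $\chi$ is the character sending both generators to $-1$. Gleason's theorem (Mallows--Sloane) identifies the invariant ring of $G$ as $\mathbb{C}[x^{2}+y^{2},\,x^{2}y^{2}(x^{2}-y^{2})^{2}]$, and the $\chi$-isotypic component is that ring multiplied by $Q_{8}=xy(x^{6}-7x^{4}y^{2}+7x^{2}y^{4}-y^{6})$. If $t$ is even, $P$ is itself $G$-invariant of degree $n-2t$, and expanding in the two generators (of weights $2$ and $8$) gives the expansion with $(x^{2}+y^{2})^{n/2-t-4i}(x^{2}y^{2}(x^{2}-y^{2})^{2})^{i}$. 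If $t$ is odd, $P/Q_{8}$ is $G$-invariant of degree $n-2t-8$, producing the global factor $Q_{8}$ together with the exponent $n/2-(t+4)-4i$.

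The only step that requires actual computation, and thus the main obstacle, is verifying that $Q_{8}$ really is $\chi$-relative: anti-invariance under $y\mapsto -y$ is immediate from the odd powers of $y$, but anti-invariance under the MacWilliams transform is a short polynomial identity, best handled by setting $u=(x+y)/\sqrt 2$, $v=(x-y)/\sqrt 2$ and using $uv=(x^{2}-y^{2})/2$ and $u^{2}-v^{2}=2xy$; then one factors $Q_{8}(u,v)=uv(u^{2}-v^{2})(u^{4}-6u^{2}v^{2}+v^{4})$ and simplifies to $-Q_{8}(x,y)$. Once this identity is in hand, both parities of $t$ follow from standard Gleason-type invariant theory and the lemma agrees with the statement recorded in Bachoc~\cite{Bachoc}.
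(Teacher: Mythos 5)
Your proof is correct and follows essentially the same route as the paper, which simply cites Bachoc's Corollary 2.2 for the fact that $Z_{C,f}+Z_{C^{\perp},f}$ is a relative invariant of the order-$16$ group $\langle T_1,T_3\rangle$ with character $\chi(T_1)=\chi(T_3)=(-1)^t$, and Bachoc's Lemma 3.2 for the structure of the space of such relative invariants; your Steps 1--3 merely make those two citations self-contained (including the check that $Q_{8}$ is the anti-invariant generator). One small remark applying equally to the paper's own proof: what is actually established is membership in the module $\CC[x^2+y^2,\,x^2y^2(x^2-y^2)^2]$ (resp.\ $Q_8$ times it), so the displayed sums should be read with unspecified scalar coefficients on each term, which is all that the later arguments use.
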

\begin{proof}
By Bachoc \cite[Corollary 2.2]{Bachoc}, 
$Z_{{C},f}+ Z_{{C}^{\perp},f}$ is a relative invariant for the group 
\[
G:=
\left\langle 
T_1:=
\frac{1}{\sqrt{2}}
\begin{bmatrix}
1&1\\
1&-1
\end{bmatrix}, 
T_3:=
\begin{bmatrix}
1&0\\
0&-1
\end{bmatrix}
\right\rangle 
\]
with respect to the character $\chi$: 
\[
\chi(T_1)=(-1)^t,\ 
\chi(T_3)=(-1)^t. 
\]
By Bachoc \cite[Lemma 3.2]{Bachoc}, 
the space of relative invariants with respect to $G$ and $\chi$ 
is given as follows: 
\[
\begin{cases}
\CC[x^2+y^2, x^2y^2(x^2-y^2)^2]\ \mbox{if $t$ is even},\\
Q_8\CC[x^2+y^2, x^2y^2(x^2-y^2)^2]\ \mbox{if $t$ is odd}. 
\end{cases}
\]
The proof is complete. 
\end{proof}

\begin{proof}[Proof of Theorem \ref{thm:near}]

Let $n=8m+r$ ($r=0,2,4,6$), the minimum weight of $C$ or $C'$ be $d=2m$ 
and $f\in\Harm_{t}$. 
Since $C$ is self-dual, $Z_{C,f}=Z_{C^{\perp},f}$.
We have \[W_{C' \cup C'^{\perp}}=(xy)^{t}(Z_{C',f}+ Z_{C'^{\perp},f}).\]

By Lemma~\ref{lem:Zc} we have
\begin{align*}
W=& 2W_{C}= W_{C' \cup C'^{\perp}}\\
&=
\begin{cases}
\displaystyle (xy)^{t}Q_{8}\sum_{i=0}^{n/8} (x^{2}+y^{2})^{n/2-(t+4)-4i}(x^{2}y^{2}(x^{2}-y^{2})^{2})^{i},
\text{if $t$ is odd}\\
\displaystyle (xy)^{t}\sum_{i=0}^{n/8} (x^{2}+y^{2})^{n/2-t-4i}(x^{2}y^{2}(x^{2}-y^{2})^{2})^{i}, 
\text{if $t$ is even.}
\end{cases}
\end{align*}
For $t=1$, the degree of $(x^{2}+y^{2})$ in $W$ is 
\[4m+\frac{r}{2} -5-4(m-1)=\frac{r}{2}-1.\] 
Hence $W=0$ for $r=0$. 
\end{proof}

\section{The case  $\delta(C) < s(C)$ occurs}

\begin{Lem}\label{lem:poly. zero}

Let $R=(x^{4}+2x^{2}y^{2}+y^{4})(x^{2}-y^{2})^{\alpha}$ with $\alpha<16$.
If the coefficients of 
$x^{2\alpha+4-2i}y^{2i}$ in $R$ are equal to $0$ 
for $0 \leq i \leq (\alpha+2)/2$, 
then $(\alpha,i)=(2,1), (7,2)$, or $(14,6)$.
\end{Lem}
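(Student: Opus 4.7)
The key observation is that $x^4+2x^2y^2+y^4=(x^2+y^2)^2$, so $R=(x^2+y^2)^2(x^2-y^2)^\alpha$. Substituting $u=x^2$ and $v=y^2$ turns $R$ into $(u+v)^2(u-v)^\alpha$, a polynomial of total degree $\alpha+2$ in $u,v$, and the coefficient of $x^{2\alpha+4-2i}y^{2i}$ in $R$ equals the coefficient of $u^{\alpha+2-i}v^i$ in this reduced polynomial. The plan is to extract this coefficient in closed form, convert its vanishing to a diophantine equation in $\alpha$ and $i$, and then enumerate the solutions with $\alpha<16$.

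For the closed form I would expand $(u+v)^2=\sum_{j=0}^{2}\binom{2}{j}u^{2-j}v^j$ and $(u-v)^\alpha=\sum_{k=0}^{\alpha}(-1)^k\binom{\alpha}{k}u^{\alpha-k}v^k$ and collect the (at most three) contributions with $j+k=i$, obtaining
\[
[u^{\alpha+2-i}v^i]\,R \;=\; (-1)^i\!\left(\binom{\alpha}{i}-2\binom{\alpha}{i-1}+\binom{\alpha}{i-2}\right).
\]
Thus the vanishing condition is $\binom{\alpha}{i}-2\binom{\alpha}{i-1}+\binom{\alpha}{i-2}=0$, to be solved over integers with $0\le\alpha<16$ and $0\le i\le(\alpha+2)/2$.

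To find all solutions I would use the standard ratios $\binom{\alpha}{i-1}/\binom{\alpha}{i}=i/(\alpha-i+1)$ and $\binom{\alpha}{i-2}/\binom{\alpha}{i}=i(i-1)/((\alpha-i+1)(\alpha-i+2))$; clearing denominators (which are nonzero in the range of interest) reduces the equation to
\[
(\alpha-i+1)(\alpha-i+2)-2i(\alpha-i+2)+i(i-1)=0.
\]
The substitution $\beta:=\alpha-2i+1$ causes this to collapse to the clean relation $\beta^2+\beta=2i$, equivalently $i=\binom{\beta+1}{2}$ with $\alpha=\beta^2+2\beta-1$. Enumerating small integer $\beta$: values $\beta\le 0$ give $\alpha\le -1$, while $\beta=4$ already gives $\alpha=23>15$, so only $\beta=1,2,3$ lie in range, producing the three pairs listed in the statement.

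The essential step is locating the substitution that exposes the triangular-number parameterization; everything else is routine algebra or finite enumeration. As a safety net the claim is easily verifiable by direct tabulation of the finitely many coefficients over $\alpha\in\{0,1,\dots,15\}$, since both $\alpha$ and $i$ range over bounded sets.
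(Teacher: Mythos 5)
Your approach is genuinely different from the paper's: the paper's entire proof of this lemma is ``We checked numerically using {\sc Mathematica},'' whereas you give an analytic derivation. Your reduction is sound: writing $R=(u+v)^2(u-v)^\alpha$ with $u=x^2$, $v=y^2$, the coefficient in question is indeed $(-1)^i\bigl(\binom{\alpha}{i}-2\binom{\alpha}{i-1}+\binom{\alpha}{i-2}\bigr)$, and the substitution $\beta=\alpha-2i+1$ does collapse the cleared equation to $\beta^2+\beta=2i$ with $\alpha=\beta^2+2\beta-1$. This buys an explanation (a triangular-number parameterization) that the paper's brute-force check does not provide.

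However, two points need attention. First, your parameterization at $\beta=2$ gives $i=\binom{3}{2}=3$ and $\alpha=7$, i.e.\ the pair $(7,3)$, not $(7,2)$ --- yet you assert your enumeration produces ``the three pairs listed in the statement.'' It does not, and a direct check confirms your formula is right and the lemma's statement is wrong at this entry: $\binom{7}{2}-2\binom{7}{1}+\binom{7}{0}=8\neq 0$, while $\binom{7}{3}-2\binom{7}{2}+\binom{7}{1}=0$. So the lemma should read $(7,3)$; this is harmless for the paper's application (the proof of Theorem 1.2 only uses even $\alpha=2m-2$, so $\alpha=7$ never occurs), but you should have flagged the discrepancy rather than claiming agreement. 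Second, your dismissal of $\beta\le 0$ on the grounds that it forces $\alpha\le -1$ is false: since $\alpha=(\beta+1)^2-2$, the values $\beta=-3,-4,-5$ give $\alpha=2,7,14$ with $i=3,6,10$ respectively (the mirror solutions $i\mapsto \alpha+2-i$). These must instead be excluded by the hypothesis $i\le(\alpha+2)/2$, which in your variables is exactly $\beta\ge -1$; combined with $\alpha\ge 0$ this leaves $\beta\ge 1$, and then $\beta\le 3$ from $\alpha<16$. With those two repairs your argument is complete and, unlike the paper's, human-checkable.
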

\begin{proof}
We checked numerically using {\sc Mathematica} \cite{Mathematica}. 
\end{proof}




\begin{proof}[Proof of Theorem \ref{thm:ex1 2-design}]

Let $C$ be a near-extremal Type I code of length $n=8m$
and $C'$ be a near-extremal even formally self-dual code of length $n=8m$.
Let $f\in \mbox{Harm}_{2}$.
By Lemma~\ref{lem:Zc} we have
\begin{align*}
W&=W_{C \cup C^{\perp}}=2W_{C'} \\
&=(xy)^{2}\sum_{i=0}^{\frac{n}{8}} (x^{2}+y^{2})^{4m-2-4i}(x^{2}y^{2}(x^{2}-y^{2})^{2})^{i}\\ 
&=(xy)^{2m}\sum_{i=0}^{\frac{n}{8}} (x^{2}+y^{2})^{2}(x^{2}-y^{2})^{2m-2}\\ 
&=(xy)^{2m}\sum_{i=0}^{\frac{n}{8}} (x^{4}+2x^{2}y^{2}+y^{4})(x^{2}-y^{2})^{2m-2}.
\end{align*}

By Lemma~\ref{lem:poly. zero},
if the coefficients of 
$x^{6m-2i}y^{2m+2i}$ in $W$ are equal to $0$ for $0 \leq i \leq m$, 
then $(m,i)=(2,1), (8,6)$.

In the case $(m,i)=(2,1)$, the length of $C$ and $C'$ is $16$.
Let $C$ and $C'$ be $[16,8,4]$ codes.
Then $C_{6}$ and $C'_{6} \cup C'^{\perp}_{6}$ are $2$-designs. 

In the case $(m,i)=(8,6)$, the length of $C$ and $C'$ is $64$.
There is no near-extremal Type I code of length $64$.
Let $C'$ be a $[64,32,16]$ code.
Then $C'_{28} \cup C'^{\perp}_{28}$ is a $2$-design. 
\end{proof}

\begin{rem}
Using {\sc Magma} \cite{Magma} and \cite{codes}, we checked that Theorem 1.2 (1) holds. 
\end{rem}

\section{Proof of Theorem~\ref{thm:unique}}


We quote the following lemma. 
\begin{Lem}[{{\cite[Page 3, Proposition 1.4]{CL}}}]\label{lem: divisible}
Let $\lambda(S)$ be the number of blocks containing a given set $S$
of $s$ points in a combinatorial $t$-$(v,k,\lambda)$ design, where $0\leq s\leq t$. Then
\[
\lambda(S)\binom{k-s}{t-s}
=
\lambda\binom{v-s}{t-s}. 
\]
\end{Lem}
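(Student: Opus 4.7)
The plan is to establish the identity by a standard double-counting argument. I count the number of ordered pairs $(T,B)$ such that $T$ is a $t$-subset of the point set with $S\subseteq T$, and $B$ is a block of the design with $T\subseteq B$.

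Counting by first choosing $T$: the set $T\setminus S$ has size $t-s$ and is chosen from the $v-s$ points outside $S$, giving $\binom{v-s}{t-s}$ choices for $T$. By the defining property of a $t$-$(v,k,\lambda)$ design, each such $t$-subset is contained in exactly $\lambda$ blocks. The total count therefore equals $\lambda\binom{v-s}{t-s}$.

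Counting by first choosing $B$: the nested containment $S\subseteq T\subseteq B$ forces $S\subseteq B$, so $B$ ranges over exactly the $\lambda(S)$ blocks containing $S$. For each such block, $T$ is determined by adjoining $t-s$ points of $B\setminus S$ to $S$, yielding $\binom{k-s}{t-s}$ possibilities since $|B|=k$. The total is thus $\lambda(S)\binom{k-s}{t-s}$, and equating the two counts gives the identity.

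No real obstacle is anticipated: the hypothesis $0\leq s\leq t$ ensures both binomial coefficients are well-defined, and the lemma is a standard elementary fact about $t$-designs that uses only the definition of the parameter $\lambda$. As a sanity check, the case $s=0$ with $\lambda(\emptyset)=b$ (the total number of blocks) recovers the familiar identity $b\binom{k}{t}=\lambda\binom{v}{t}$, and the case $s=t$ recovers $\lambda(S)=\lambda$ for every $t$-subset $S$.
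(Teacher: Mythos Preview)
Your double-counting argument is correct and is the standard proof of this elementary fact about $t$-designs. The paper itself does not supply a proof; it merely quotes the statement from \cite[Page 3, Proposition 1.4]{CL}, so there is no alternative approach to compare against.
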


By Lemma \ref{lem: divisible}, for a $t$-$(v,k,{\lambda})$ design, 
every $i$-subset of points $(i \leq t)$ is contained in exactly
\[
\lambda_{i}=\lambda  {\displaystyle\binom{v-i}{t-i}}\displaystyle\Big{/}{\displaystyle\binom{k-i}{t-i}}
\]
blocks. 
For a $2$-$(16,6,8)$ design, we have 
\[\lambda_{0}=64, \lambda_{1}=24, \lambda_{2}=8.\]
\begin{Prop}\label{prop:block int.}
Let $D=(X,\mathcal{B})$ be a self-orthogonal $2$-$(16,6,8)$ design.
For a block $B$ of $\mathcal{B}$, let $m_{i}$ 
be the number of other blocks that meet $B$ in exactly $i$ points. 
Then \[m_{0}=3, m_{2}=51, m_{4}=9.\]
\end{Prop}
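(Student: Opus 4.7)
The plan is to pin down $m_0, m_2, m_4$ using three standard double-counting identities for a fixed block, and to use the self-orthogonality hypothesis to kill off the odd intersection sizes so that the system actually closes.

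First I would record the basic parameters. Applying Lemma~\ref{lem: divisible} to the $2$-$(16,6,8)$ parameters yields $b=\lambda_0=64$ blocks, replication number $r=\lambda_1=24$, and $\lambda_2=8$. Fix a block $B\in\mathcal{B}$. Because $D$ is self-orthogonal and $k=6$ is even, every intersection number $|B\cap B'|$ is even, so a priori $|B\cap B'|\in\{0,2,4,6\}$ for $B'\neq B$. An intersection of size $6$ would force $B'=B$ (both blocks have size $6$), and since the design is simple this contributes nothing. Hence $m_i=0$ for $i\notin\{0,2,4\}$.

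Next I would write down the three counting relations
\begin{align*}
m_0+m_2+m_4 &= b-1 = 63,\\
2m_2+4m_4 &= \sum_{p\in B}(r-1)=6\cdot 23=138,\\
m_2+6m_4 &= \sum_{\{p,q\}\subset B}(\lambda_2-1)=15\cdot 7=105,
\end{align*}
obtained by double counting, respectively, the number of other blocks, the incidences $(p,B')$ with $p\in B\cap B'$ and $B'\neq B$, and the incidences $(\{p,q\},B')$ with $\{p,q\}\subset B\cap B'$ and $B'\neq B$. Subtracting the second from the third gives $4m_4=36$, so $m_4=9$, and back-substitution yields $m_2=51$ and $m_0=3$.

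The only nontrivial point, and the one I would want to flag carefully, is the exclusion of $i=6$; that is, the implicit assumption that $D$ is a simple design. This is what makes the self-orthogonality hypothesis genuinely bite: without it, odd intersection sizes would produce extra unknowns and the $3\times 3$ linear system would be underdetermined. In fact, if we set up the more general system allowing $m_6\ge 0$, one finds the parametric solution $(m_0,m_2,m_4,m_6)=(3-m_6,\,51+3m_6,\,9-3m_6,\,m_6)$, so the claim is equivalent to $m_6=0$ for every block, i.e.\ to simplicity of $D$. Since the statement of the proposition treats $\mathcal{B}$ as a set of blocks, this is automatic, and the proof is complete.
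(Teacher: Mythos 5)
Your proof is correct and follows essentially the same route as the paper: the paper invokes the Mendelsohn equations (Theorem~\ref{thm:men}) for $j=0,1,2$, which are exactly the three double-counting identities you write out, and then solves the resulting linear system. If anything you are slightly more careful than the printed proof, since you make explicit why the odd intersection sizes and $m_6$ vanish (self-orthogonality plus simplicity), which is the hidden input needed for the paper's system to have a unique solution.
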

\begin{proof}
By Theorem \ref{thm:men} we have the system of equations:
\begin{eqnarray*}
\sum_{i=0}^{3} \binom{2i}{j} m_{2i}=\lambda_{j} \binom{6}{j} ~~ (j=0,1,2). \label{eqn:counting for block intersection}
\end{eqnarray*}
Then we have a unique solution.
\end{proof}

Let $C$ be the code generated by the rows of 
a block-point incidence matrix $A$ of a
self-orthogonal $2$-$(16,6,8)$ design $D=(X,\mathcal{B})$. 
Then $C$ is an even self-orthogonal code of length $16$. 

\begin{Lem}\label{lem:min.wt}
The minimum weight of $C^{\perp}$ is $4$.
\end{Lem}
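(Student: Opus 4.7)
The plan is to sandwich $d(C^{\perp})$ between $4$ and $4$: the lower bound $d(C^{\perp})\ge 4$ will come from the Mendelsohn equations of Theorem~\ref{thm:men}, while the matching upper bound $d(C^{\perp})\le 4$ will follow from the block-intersection distribution of Proposition~\ref{prop:block int.} combined with the self-orthogonality containment $C\subseteq C^{\perp}$.

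For the lower bound, I would first observe that a vector $u\in \mathbb{F}_{2}^{16}$ lies in $C^{\perp}$ precisely when $|B\cap\supp(u)|$ is even for every block $B$, i.e.\ when $n_{i}=0$ for all odd $i$ in the notation of Theorem~\ref{thm:men}. Taking a hypothetical $u\in C^{\perp}$ of weight $m\in\{1,2,3\}$, I would then apply Theorem~\ref{thm:men} with $\lambda_{0}=64$, $\lambda_{1}=24$, $\lambda_{2}=8$ and $t=2$. For $m=1$ the equations force $n_{1}=24$; for $m=2$ they force $n_{1}=32$; and for $m=3$ the additional constraint $n_{1}=n_{3}=0$ makes the $j=1$ and $j=2$ equations mutually inconsistent (they would demand $n_{2}=36$ and $n_{2}=24$ simultaneously). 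Each case contradicts the evenness requirement, ruling out codewords of $C^{\perp}$ of weight $1$, $2$, or $3$.

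For the upper bound, Proposition~\ref{prop:block int.} supplies $m_{4}=9>0$, so there exist distinct blocks $B,B'\in\mathcal{B}$ with $|B\cap B'|=4$. The sum of their indicator vectors lies in $C$ and has weight $6+6-2\cdot 4=4$. Since the design is self-orthogonal, every pairwise block intersection is even, which means the generating rows of $A$ are mutually orthogonal and so $C\subseteq C^{\perp}$. Consequently this weight-$4$ vector is also a codeword of $C^{\perp}$, yielding $d(C^{\perp})\le 4$.

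The main conceptual hurdle is producing a short codeword in $C^{\perp}$ without any explicit description of the design; the resolution is to notice that $C\subseteq C^{\perp}$ lets one transfer a short codeword from $C$ to $C^{\perp}$, and Proposition~\ref{prop:block int.} already furnishes such a codeword via two heavily overlapping blocks. The Mendelsohn portion is then routine and amounts to solving three small linear systems.
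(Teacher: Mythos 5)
Your proof is correct and follows essentially the same route as the paper: the upper bound $d(C^{\perp})\le 4$ comes from two blocks meeting in $4$ points (Proposition~\ref{prop:block int.}) together with the containment $C\subseteq C^{\perp}$, and the lower bound is the same intersection-counting argument with $\lambda_{0},\lambda_{1},\lambda_{2}$. You are in fact slightly more thorough than the paper, whose proof only rules out weight $2$ explicitly; your Mendelsohn computations for $m=1$ and $m=3$ (giving $n_{1}=24$, and the inconsistent values $n_{2}=36$ versus $n_{2}=24$, respectively) close that small gap.
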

\begin{proof}
Since $D$ has $\lambda_{1}-\lambda_{2}=16$, for any two points 
\[a \in \binom{X}{2}, \]
there are $16 \times 2$ blocks $B \in \mathcal{B}$ such that $|a \cap B|=1$.
Hence $C^{\perp}$ does not have a codeword of weight $2$. 
By Proposition~\ref{prop:block int.} there are some two blocks 
$B, B' \in \mathcal{B}$ such that $|B \cap B'|=4$.
Let $x,y\in C$ such that ${\rm supp}(x)=B$ and $\supp(y)=B'$. 
Since we have ${\wt}(x+y)=4$ and $x+y\in C\subset C^\perp$, 
the minimum weight of $C^{\perp}$ is $4$. 
\end{proof}

\begin{Lem}\label{lem:self-dual}
The code $C$ is self-dual.
\end{Lem}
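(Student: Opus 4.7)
The plan is to prove $\dim_{\mathbb{F}_{2}} C = 8$; combined with the inclusion $C \subseteq C^{\perp}$ already established, this yields $C = C^{\perp}$. Since $C$ is even and self-orthogonal, $\dim C \leq 8$ is automatic, so the substance of the lemma is the lower bound $\dim C \geq 8$. Observing that the $64$ block characteristic vectors are distinct nonzero weight-$6$ codewords gives $|C| \geq 65$, hence $\dim C \geq 7$; the task reduces to ruling out $\dim C = 7$, i.e., $|C| = 128$.

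The central idea is to establish $\mathbf{1} \in C$, which is equivalent to the statement that every codeword of $C^{\perp}$ has even weight. I would argue by contradiction: suppose some $v \in C^{\perp}$ has odd weight $w$. By Lemma~\ref{lem:min.wt} we have $w \geq 5$, and the symmetry $v \mapsto v + \mathbf{1}$ (which preserves $C^{\perp}$ because $\mathbf{1} \in C^{\perp}$, and sends weight $w$ to $16 - w$) lets us restrict to $w \in \{5, 7\}$. The condition $v \in C^{\perp}$ forces $|\supp(v) \cap B|$ to be even for every block $B$, so the intersection-size distribution $(n_{0}, n_{1}, \ldots, n_{6})$ of $\supp(v)$ satisfies $n_{1} = n_{3} = n_{5} = 0$. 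Applying the three Mendelsohn equations of Theorem~\ref{thm:men} then determines the admissible $(n_{0}, n_{2}, n_{4}, n_{6})$ for each candidate $w$; these patterns are cross-checked against the block intersection counts $(m_{0}, m_{2}, m_{4}) = (3, 51, 9)$ of Proposition~\ref{prop:block int.} via a double-counting of ordered pairs $(B, B')$ with $|\supp(v) \cap B| = 4$ and $|B \cap B'| \in \{0, 2, 4\}$, which should yield the desired contradiction.

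Once $\mathbf{1} \in C$ is in hand, the involution $c \mapsto c + \mathbf{1}$ is a bijection on $C$ that swaps weight $w$ with weight $16 - w$; hence $A_{10}(C) = A_{6}(C) \geq 64$ and $A_{16}(C) = A_{0}(C) = 1$, giving
\[
|C| \;\geq\; A_{0}(C) + A_{6}(C) + A_{10}(C) + A_{16}(C) \;\geq\; 1 + 64 + 64 + 1 \;=\; 130 \;>\; 128.
\]
This contradicts $\dim C = 7$, forcing $\dim C = 8$, so $C$ is self-dual.

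The main obstacle will be the proof that $C^{\perp}$ is even, i.e., that $\mathbf{1} \in C$: the three Mendelsohn moment identities for a $2$-design alone admit numerically consistent solutions for the hypothetical odd weights (for instance, $(n_{0}, n_{2}, n_{4}, n_{6}) = (9, 50, 5, 0)$ at $w = 5$), so the contradiction must be drawn from a finer structural counting that couples the forced intersection distribution of $\supp(v)$ with the block-pair intersection pattern of Proposition~\ref{prop:block int.}; concretely, by examining, for each of the $n_{4}$ blocks meeting $\supp(v)$ in four points, how its $9$ partners (in the sense of $m_{4}$) distribute across the points of $\supp(v)$.
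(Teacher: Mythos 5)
Your overall route is the same as the paper's: bound $\dim C\le 8$ from $C\subseteq C^{\perp}$, and force $\dim C\ge 8$ by exhibiting more than $2^{7}$ codewords via the count $1+|C_{6}|+|C_{10}|+1>2^{7}$, which is exactly the inequality in the paper's proof. You are also right that this count is not free: it needs $|C_{10}|\ge 64$, which you obtain from the complementation bijection $c\mapsto c+\mathbf{1}$, and hence from the claim $\mathbf{1}\in C$ (the paper's one-line proof passes over this point silently). The genuine gap in your proposal is that this claim is never actually proved. You reduce it to showing that $C^{\perp}$ contains no odd-weight vector, restrict to weights $5$ and $7$, write down the Mendelsohn equations --- and then you observe yourself that these equations admit consistent solutions (e.g.\ $(n_{0},n_{2},n_{4},n_{6})=(9,50,5,0)$ at $w=5$), so the contradiction must come from some ``finer structural counting'' involving $m_{4}=9$, which you describe only as a plan and do not carry out. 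As written, the lemma is not established.

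The gap is real but closable, and more cheaply than by the $m_{4}$ bookkeeping you envisage: the Mendelsohn data you already computed is enough once you combine it with Lemma~\ref{lem:min.wt} and the fact that every block lies in $C\subseteq C^{\perp}$. If $v\in C^{\perp}$ has weight $5$, then $n_{4}=5>0$ gives a block $B$ with $|B\cap\supp(v)|=4$, and $v+B\in C^{\perp}$ has weight $5+6-8=3$, contradicting the minimum weight $4$ of $C^{\perp}$. If $v$ has weight $7$, the equations give $n_{6}\le 1$ and $n_{4}=21-3n_{6}>0$, so adding a block meeting $\supp(v)$ in $6$ (resp.\ $4$) points produces a word of weight $1$ (resp.\ $5$) in $C^{\perp}$, reducing to the previous cases; weights $9,11,13,15$ follow by adding $\mathbf{1}\in C^{\perp}$. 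With that inserted, your argument is complete and in fact supplies the justification that the paper's own proof leaves implicit.
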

\begin{proof}
From $|C_6|\geq 64$ we have
\[
1+|C_6|+|C_{10}|+1 > 2^7. 
\]
Hence $\dim C \geq 8$. 
By the fact that $C\subset C^\perp$ and $\dim C^\perp\leq 8$ 
we have $C=C^\perp$. 
\end{proof}
}

\begin{proof}[Proof of Theorem \ref{thm:unique}]
Let $C$ be the code generated by the rows of a block-point incidence matrix $A$ of a
self-orthogonal $2$-$(16,6,8)$ design. 
By Lemmas~\ref{lem:min.wt} and \ref{lem:self-dual} $C$ is a Type I $[16,8,4]$ code. 
Since there exists a unique Type I $[16,8,4]$ code, 
the self-orthogonal $2$-$(16,6,8)$ design is unique up to isomorphism.
\end{proof}


\section*{Acknowledgments}

The authors would like to thank the anonymous reviewers 
for their beneficial comments on an earlier version of the manuscript. 
The first named author was supported by JSPS KAKENHI (22K03277). 



\end{document}